%
\documentclass[12pt, reqno]{amsart}
\usepackage{amsmath, amsthm, amscd, amsfonts, amssymb, graphicx, color}
\usepackage[bookmarksnumbered, colorlinks, plainpages]{hyperref}
\hypersetup{colorlinks=true,linkcolor=red, anchorcolor=green, citecolor=cyan, urlcolor=red, filecolor=magenta, pdftoolbar=true}

\usepackage{tensor}

\textheight 22.5truecm \textwidth 14.5truecm
\setlength{\oddsidemargin}{0.35in}\setlength{\evensidemargin}{0.35in}

\setlength{\topmargin}{-.5cm}

\newtheorem{theorem}{Theorem}[section]

\newtheorem{proposition}[theorem]{Proposition}
\newtheorem{corollary}[theorem]{Corollary}
\theoremstyle{definition}
\newtheorem{definition}[theorem]{Definition}
\newtheorem{example}[theorem]{Example}

\newtheorem{remark}[theorem]{Remark}

\numberwithin{equation}{section}

\setlength{\parindent}{0pt}

\usepackage{fullpage}

\begin{document}
	
	\setcounter{page}{1}
	
	\title[PMS-completeness]{A new characterization of partial metric completeness}

	\author[Ya\'e Ulrich Gaba]{Ya\'e Ulrich Gaba$^{1,2,\dagger}$}

	\address{$^{1}$ Institut de Math\'ematiques et de Sciences Physiques (IMSP), 01 BP 613 Porto-Novo, B\'enin.}

	\address{$^{2}$ African Center for Advanced Studies (ACAS),
		P.O. Box 4477, Yaounde, Cameroon.}

	\email{\textcolor[rgb]{0.00,0.00,0.84}{yaeulrich.gaba@gmail.com
	}}

	\subjclass[2010]{Primary 47H05; Secondary 47H09, 47H10.}
	
	\keywords{metric type space, fixed point, $\lambda$-sequence.}

		\subjclass[2010]{Primary 47H05; Secondary 47H09, 47H10.}

	\keywords{Partial metric; completeness; fxed point.}
	
	\date{Received: xxxxxx; Accepted: zzzzzz.
		\newline \indent $^{\dagger}$Corresponding author}
	
	\begin{abstract}
		In this article, we present a new characterization of the completeness of a partial metric space--which we call \textit{orbital characterization}-- using fixed point results.

		\end{abstract} 
	
	\maketitle
	
	\section{Introduction and preliminaries}
	
It is known that the Banach contractive mapping theorem does not characterize the complete metric spaces. The first characterization of completeness in terms of contraction was done by Hu \cite{hu67}.
\begin{theorem}\label{t.Hu}
	A metric space $(X,\rho)$ is complete if and only if  for every nonempty closed subset $Y$ of $X$ every contraction on $Y$ has a fixed point in $Y$.
\end{theorem}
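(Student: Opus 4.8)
The plan is to treat the two implications separately, noting that the forward direction is an immediate consequence of the Banach contraction principle, while the real content of the theorem lies in the converse, which I would prove by contraposition.

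For the forward implication, assume $(X,\rho)$ is complete and let $Y$ be a nonempty closed subset. Since a closed subspace of a complete metric space is itself complete, $(Y,\rho)$ is a complete metric space, and the Banach contraction mapping theorem guarantees that any contraction $f\colon Y\to Y$ has a (unique) fixed point in $Y$. This half requires no new ideas.

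For the converse I would argue by contraposition: assuming $(X,\rho)$ is not complete, I would exhibit a nonempty closed subset $Y$ carrying a contraction with no fixed point. Start from a Cauchy sequence $(x_n)$ with no limit in $X$. Passing to a subsequence, I may assume the terms are pairwise distinct (a Cauchy sequence attaining finitely many values is eventually constant, hence convergent) and that the jumps $a_n:=\rho(x_n,x_{n+1})$ decay so fast that $\sum_{k>n}a_k\le \tfrac14 a_n$ for every $n$; a sufficiently small geometric rate suffices. Set $Y=\{x_n: n\ge 1\}$. Because $(x_n)$ is Cauchy with no limit, $Y$ has no accumulation point in $X$, so $Y$ is closed and nonempty. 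Define the shift $f(x_n)=x_{n+1}$, which is well defined since the terms are distinct and clearly has no fixed point.

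The crux is to verify that $f$ is a contraction, and this is where the rapid-decay choice pays off. For $n<m$ the triangle inequality gives the lower bound $\rho(x_n,x_m)\ge a_n-\sum_{k=n+1}^{m-1}a_k\ge \tfrac34 a_n$ and the upper bound $\rho(x_{n+1},x_{m+1})\le\sum_{k=n+1}^{m}a_k\le \tfrac14 a_n$, whence $\rho(f(x_n),f(x_m))\le \tfrac13\,\rho(x_n,x_m)$. Thus $f$ is a contraction with constant $\tfrac13$ on the nonempty closed set $Y$ yet has no fixed point, contradicting the hypothesis; therefore $X$ must be complete. The main obstacle is precisely this uniform contraction estimate: the shift map is not automatically contractive, and one must pre-process the sequence so that each gap $a_n$ dominates the sum of all later gaps, turning the mere distinctness of the terms into a quantitative separation that forces the contraction ratio below $1$.
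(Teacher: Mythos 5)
The paper does not prove Theorem \ref{t.Hu} at all: it is stated in the introduction as a quotation of Hu's result \cite{hu67}, so there is no internal proof to compare against; the relevant benchmark is Hu's classical argument, which your proof follows in outline. Your forward direction is fine, and \emph{granted} the domination condition $\sum_{k>n}a_k\le\tfrac14 a_n$, the rest of your converse is correct: the bounds $\rho(x_n,x_m)\ge\tfrac34 a_n$ and $\rho(x_{n+1},x_{m+1})\le\tfrac14 a_n$ are valid chains of triangle inequalities, they give the contraction constant $\tfrac13$, the lower bound also yields pairwise distinctness (so the shift is well defined and fixed-point-free), and a non-convergent Cauchy sequence indeed has no accumulation point, so $Y$ is closed.

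The genuine gap is the extraction step itself, which you dismiss with ``a sufficiently small geometric rate suffices.'' You cannot prescribe the gaps of a subsequence at will: once $y_n$ is fixed, $\rho(y_n,x_m)\to d(y_n):=\lim_m\rho(y_n,x_m)$ as $m\to\infty$, so every sufficiently late choice of the next term forces $a_n$ to be close to the \emph{fixed} positive number $d(y_n)$ --- you can neither make $a_n$ small on demand nor impose geometric decay on the $a_n$ directly. Nor does a rapid Cauchy modulus help: choosing $n_k$ with $\rho(x_{n_k},x_m)\le 4^{-k}$ for all $m\ge n_k$ upper-bounds the gaps but gives no lower bound, so the required domination of each gap over all later gaps can fail. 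The missing idea --- and the only place where non-convergence is genuinely used --- is the auxiliary function $d(x)=\lim_m\rho(x,x_m)$, which exists by Cauchyness, is strictly positive because $(x_n)$ has no limit, satisfies $|d(u)-d(v)|\le\rho(u,v)\le d(u)+d(v)$, and tends to $0$ along the sequence. Choosing the subsequence so that $0<d(y_{k+1})\le\tfrac{1}{100}d(y_k)$ pins each gap via $\tfrac{99}{100}\,d(y_k)\le a_k\le 2\,d(y_k)$, whence $\sum_{j>k}a_j\le 2\sum_{j>k}d(y_j)\le\tfrac{2}{99}\,d(y_k)\le\tfrac{200}{99\cdot 99}\,a_k<\tfrac14 a_k$, which is exactly your domination condition. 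With this lemma inserted, your proof is complete and is essentially Hu's original argument.
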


Subrahmanyam \cite{sub} proved the following completeness result.
\begin{theorem}\label{t.Subram}
	A metric space $(X,\rho)$ in which every mapping $f:X\to X$ satisfying the conditions
	
	{\rm (i)}\; there exists $\alpha>0$ such that  $$\rho(f(x),f(y))\le \alpha \max\{\rho(x,f(x)),\rho(y,f(y))\} \text{ for all } x,y\in X;$$ 
	
	{\rm (ii)}\; $f(X)$ is countable;\\
	has a fixed point, is complete.
\end{theorem}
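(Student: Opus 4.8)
The plan is to prove the contrapositive: assuming $(X,\rho)$ is \emph{not} complete, I will manufacture a single map $f:X\to X$ that satisfies both (i) and (ii) yet has no fixed point, contradicting the hypothesis. The raw material is a Cauchy sequence $(x_n)$ in $X$ with no limit in $X$. Passing to the completion $(\widehat X,\widehat\rho)$, such a sequence converges to some point $\xi\in\widehat X\setminus X$. The function $r(x):=\widehat\rho(x,\xi)=\lim_n\rho(x,x_n)$ is then well defined on $X$, and crucially $r(x)>0$ for every $x\in X$ because $\xi\notin X$.

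Next I would fix the target set and the map. Since $r_n:=\widehat\rho(x_n,\xi)\to0$ with every $r_n>0$, I can extract a subsequence (which I relabel $(x_n)$, still converging to $\xi$) along which $(r_n)$ is strictly decreasing to $0$; in particular the $x_n$ are pairwise distinct. Fixing a threshold $\beta\in(0,\tfrac13)$, say $\beta=\tfrac14$, I define $f(x)=x_{k(x)}$, where $k(x)$ is the least index $k$ with $r_k\le\beta\,r(x)$ (well defined because $r_k\to0$). Two features are immediate. First, $f(X)\subseteq\{x_n:n\in\mathbb N\}$ is countable, so (ii) holds. Second, $r(f(x))=r_{k(x)}\le\beta\,r(x)<r(x)$ for all $x$; hence a fixed point $p=f(p)$ would force $r(p)\le\beta\,r(p)$ with $r(p)>0$, which is impossible. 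So $f$ has no fixed point.

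The main work is verifying the contraction condition (i), and this is where I expect the real obstacle. Given $x,y\in X$, assume without loss of generality $r(x)\ge r(y)$; then the threshold $\beta r(x)$ is the larger one, so $k(x)\le k(y)$ and, by monotonicity of $(r_n)$, $r_{k(y)}\le r_{k(x)}$. The triangle inequality in $\widehat X$ gives
\[
\rho(f(x),f(y))=\widehat\rho(x_{k(x)},x_{k(y)})\le r_{k(x)}+r_{k(y)}\le 2r_{k(x)}\le 2\beta\,r(x),
\]
while the reverse triangle inequality yields
\[
\rho(x,f(x))=\widehat\rho(x,x_{k(x)})\ge \big|r(x)-r_{k(x)}\big|\ge(1-\beta)\,r(x).
\]
Combining these two estimates gives $\rho(f(x),f(y))\le\frac{2\beta}{1-\beta}\,\rho(x,f(x))\le\alpha\max\{\rho(x,f(x)),\rho(y,f(y))\}$ with $\alpha=\frac{2\beta}{1-\beta}$; the choice $\beta<\tfrac13$ makes $\alpha<1$, so (i) holds. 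The delicate points are precisely extracting a usable \emph{lower} bound for $\rho(x,f(x))$ from the completion and choosing $\beta$ small enough that the ratio $\frac{2\beta}{1-\beta}$ is an admissible constant; once these are in place, $f$ is the desired counterexample and the contrapositive is complete.
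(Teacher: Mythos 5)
Your proof is correct. Note first that the paper itself gives no proof of this statement: it is quoted from Subrahmanyam's paper \cite{sub} as background. Judged against the classical argument (which the paper essentially transplants to partial metrics in its own proof of $(3)\Longrightarrow(1)$ of Theorem \ref{orbital}), your construction is the same in spirit but slightly different in mechanism. The paper's adaptation uses discrete geometric levels $P_n=\{x: p(x,u)\le b^n\}$ with $b=\tfrac15$ and sends $x$ two levels deeper, deriving $p(fx,u)\le b\,p(x,u)$ and then the Kannan-type bound via $p(fx,u)\le\frac{b}{1-b}p(x,fx)$; you instead use a continuous threshold, choosing the least index $k(x)$ with $r_{k(x)}\le\beta r(x)$, which gives the multiplicative contraction $r(f(x))\le\beta r(x)$ by definition rather than by a level-counting argument, and avoids the bookkeeping with $n(x)=\max\{n: x\in P_n\}$ (whose existence the paper leaves implicit). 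All your estimates check out: $k(x)\le k(y)$ when $r(x)\ge r(y)$ follows from inclusion of the admissible index sets, the strict monotonicity of $(r_n)$ gives $r_{k(y)}\le r_{k(x)}$, the two triangle-inequality estimates are valid (both can be phrased without the completion, via $\rho(x_m,x_n)\le r_m+r_n$ and $r(x)\le \rho(x,x_k)+r_k$ obtained by letting $j\to\infty$ in $\rho(\cdot,x_j)$), and $\beta=\tfrac14$ yields $\alpha=\frac{2\beta}{1-\beta}=\tfrac23$. One small observation: the statement as printed says ``there exists $\alpha>0$'' (presumably a typo for $\alpha\in(0,1)$, as in Subrahmanyam's original), but your construction produces $\alpha=\tfrac23$, so it establishes the contrapositive under either reading.
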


The condition (i) in this theorem is related to Kannan  and Chatterjea conditions: there exists $\alpha\in(0,\frac12)$ such that  for all $x,y\in X,$
\begin{equation}\tag{K}\label{Kan}
\rho(f(x),f(y))\le \alpha \,\left[\rho(x,f(x))+\rho(y,f(y))\right]\,,
\end{equation}
respectively
\begin{equation}\tag{Ch}\label{Chat}
\rho(f(x),f(y))\le \alpha  \,\left[\rho(x,f(y))+\rho(y,f(x))\right]\,.
\end{equation}

Kannan and Chatterjea  proved that any mapping $f$ on a complete metric space satisfying  \eqref{Kan} or \eqref{Chat} has a fixed point.  As it is remarked in \cite{sub}, Theorem \ref{t.Subram} provides completeness of metric spaces on which every Kannan, or every Chatterjea map, has a fixed point.

In \cite{kirk}, Kirk also proposed a solution of this problem using Caristi mappings, since he proved that a metric space $(X, d)$ is complete
if, and only if, every Caristi mapping for $(X, d)$ has a fixed point. More recently several authors have obtained theorems of
fixed point for generalized metric spaces and some of these generalizations are in the setting of the partial
metrics (see \cite{alt,rom} for e.g.). 

Partial metrics were introduced by Matthews\cite{mat} in 1992. They generalize the concept of a metric space in the sense
that the self-distance from a point to itself need not be equal to zero.

Ge and Lin \cite{ge} recently introduced the notions of symmetrical density and
sequential density in partial metric spaces and proved the existence and uniqueness
theorems in the classical sense for the completion of partial metric spaces. The main results of Ge and Lin are as follows.

\begin{theorem}\cite[Theorem 1]{ge})
	Every partial metric space has a completion.
\end{theorem}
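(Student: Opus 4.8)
The plan is to reduce the construction to the classical completion of ordinary metric spaces, exploiting the fact that a partial metric is completely encoded by an associated metric together with its self-distance function. Writing $\phi(x) = p(x,x)$, recall the associated metric
\[
d_p(x,y) = 2p(x,y) - \phi(x) - \phi(y),
\]
which one checks directly is a genuine metric on $X$ and which induces the same topology as $p$. Inverting this relation gives
\[
p(x,y) = \tfrac12\bigl(d_p(x,y) + \phi(x) + \phi(y)\bigr),
\]
so that $p$ is recovered from the pair $(d_p,\phi)$. The partial-metric axioms translate into two conditions on this pair: that $d_p$ be a metric, and that $\phi$ be nonnegative and nonexpansive for $d_p$, i.e.\ $|\phi(x)-\phi(y)| \le d_p(x,y)$. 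Indeed a short computation shows that $\phi(x)-\phi(y) \le d_p(x,y)$ is equivalent to the small-self-distance axiom $p(x,x) \le p(x,y)$, so nonexpansiveness of $\phi$ is nothing but that axiom in disguise.

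First I would form the metric completion $(\widetilde X, \widetilde d)$ of $(X, d_p)$ by the standard Cauchy-sequence construction, together with the canonical isometric embedding of $X$ onto a dense subset of $\widetilde X$. Since $\phi$ is nonexpansive it is uniformly continuous, hence extends uniquely to a nonexpansive map $\widetilde\phi \colon \widetilde X \to [0,\infty)$, with nonnegativity preserved by continuity and density. I would then define
\[
p^{*}(x,y) = \tfrac12\bigl(\widetilde d(x,y) + \widetilde\phi(x) + \widetilde\phi(y)\bigr) \qquad (x,y \in \widetilde X),
\]
and verify the partial-metric axioms for $p^{*}$ from the fact that $\widetilde d$ is a metric and $\widetilde\phi$ is nonnegative and nonexpansive. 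This is routine: the first axiom uses that $\widetilde d$ separates points, the axiom $p^{*}(x,x)\le p^{*}(x,y)$ is exactly nonexpansiveness of $\widetilde\phi$, and the modified triangle inequality for $p^{*}$ in fact collapses to the ordinary triangle inequality for $\widetilde d$ after the $\widetilde\phi(y)$ terms cancel. By construction $p^{*}$ restricts to $p$ on $X$ and its associated metric $d_{p^{*}}$ equals $\widetilde d$.

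It then remains to check that $(\widetilde X, p^{*})$ is a \emph{complete} partial metric space in which $X$ is dense. Density is immediate once one knows that the $p^{*}$-topology coincides with the $\widetilde d$-topology, so that density of $X$ in $(\widetilde X,\widetilde d)$ passes to $(\widetilde X, p^{*})$. For completeness I would invoke the compatibility between a partial metric and its associated metric: a sequence is Cauchy for $p^{*}$ if and only if it is Cauchy for $\widetilde d$, and a point supplied as a $\widetilde d$-limit is also a $p^{*}$-limit in the partial-metric sense. The main obstacle is precisely this last translation: one must verify carefully that the limit $x$ delivered by completeness of $\widetilde d$ satisfies the self-distance condition $p^{*}(x,x) = \lim_{n,m} p^{*}(x_n,x_m)$, which is where continuity of $\widetilde\phi$ enters. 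Granting this equivalence—essentially Matthews' comparison between $p$ and $d_p$ transported to $\widetilde X$—completeness transfers from $(\widetilde X,\widetilde d)$ to $(\widetilde X, p^{*})$, so that $(\widetilde X, p^{*})$ together with the embedding of $X$ is the desired completion.
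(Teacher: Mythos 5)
Your proof is essentially correct, but note first that the paper itself offers no proof of this statement: it is quoted from Ge and Lin \cite{ge}, whose argument is the direct Cantor-style construction --- points of the completion are equivalence classes of $p$-Cauchy sequences, with the partial metric on classes defined by $\lim_{n,m}p(x_n,y_m)$, and density established in their symmetric sense. Your route is genuinely different: you encode $p$ as the pair $(d_p,\phi)$, complete the metric, extend $\phi$ by nonexpansiveness, and reassemble. The key identities all check out: $d_p$ is a metric by (pm1)--(pm4); the inequality $|\phi(x)-\phi(y)|\le d_p(x,y)$ is, as you say, exactly (pm2); the axioms for $p^{*}$ reduce to the metric axioms for $\widetilde d$ plus nonexpansiveness of $\widetilde\phi$ (in (pm4) it is the $\widetilde\phi(z)$ terms that balance on the two sides, not the $\widetilde\phi(y)$ terms, but the collapse to the ordinary triangle inequality is correct); and $d_{p^{*}}=\widetilde d$, so the Matthews correspondence ($p^{*}$-Cauchy iff $\widetilde d$-Cauchy, and $\widetilde d$-convergence iff proper $p^{*}$-convergence --- the self-distances $\widetilde\phi(x_n)$ form a real Cauchy sequence because $\widetilde\phi$ is nonexpansive) transfers completeness in the strong, proper-convergence sense used in this paper. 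What your approach buys is economy: the classical completion theorem is a black box and every verification is a one-line computation. What Ge--Lin's approach buys is an intrinsic construction in terms of $p$-Cauchy sequences, which makes their uniqueness-of-extension result (their Proposition 1) natural.

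Two corrections are needed. First, your claim that $d_p$ ``induces the same topology as $p$'' is false in general: one only has $\tau(p)\subseteq\tau(d_p)$, and the inclusion can be strict --- for $p(x,y)=\max\{x,y\}$ on $[0,\infty)$ one gets $d_p(x,y)=|x-y|$, while $B'_p(x,\varepsilon)=[0,x+\varepsilon)$, so $\tau(p)$ is the strictly coarser lower topology. This slip is harmless where you use it, since passing density from a finer to a coarser topology only requires the true inclusion $\tau(p^{*})\subseteq\tau(\widetilde d)$. Second, because $\tau(p^{*})$ is merely $T_0$, plain $\tau(p^{*})$-density is too weak a notion for a completion theorem (it would not make the completion unique --- this is precisely why Ge and Lin introduce symmetric density); you should state density in the stronger form that every point of $\widetilde X$ is a \emph{proper} limit of a sequence from $X$, which your construction does deliver: $\widetilde d(x_n,u)\to 0$ together with $d_{p^{*}}=\widetilde d$ gives $x_n \overset{ppr}{\longrightarrow} u$.
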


\begin{theorem} \cite[Proposition 1]{ge}) Let $(X ^* , p)$ and $(Y^* , q)$ be two complete partial
metric spaces, $X$ and $Y$ be symmetrically dense subsets of $X^*$ and $Y^*$ respectively.
If $h : X \to Y$ is an isometry then there is a unique isometric extension $f : X^*\to
Y^*$ which is an extension of $h$.
\end{theorem}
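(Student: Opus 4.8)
The plan is to build the extension $f$ pointwise as a limit along approximating sequences and then to read off uniqueness from continuity together with the density of $X$ in $X^*$. First I would recall that every partial metric $p$ induces an ordinary metric $p^s(x,y)=2p(x,y)-p(x,x)-p(y,y)$, that $(X^*,p)$ is complete precisely when $(X^*,p^s)$ is a complete metric space, and that $p$ is jointly continuous for the topology $\tau_{p^s}$. Symmetrical density of $X$ in $X^*$ should be unpacked into the strong statement that each $x^*\in X^*$ is the limit of a sequence $(x_n)\subset X$ with $\lim_n p(x_n,x^*)=\lim_{n,m}p(x_n,x_m)=p(x^*,x^*)$; equivalently $(x_n)$ is $p^s$-Cauchy and $p^s$-converges to $x^*$, so that both cross-distances and self-distances are controlled.

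To define $f$, fix $x^*\in X^*$, pick such an approximating sequence $(x_n)$, and use that $h$ is an isometry to get $q^s(h(x_n),h(x_m))=p^s(x_n,x_m)$; hence $(h(x_n))$ is $q^s$-Cauchy in $Y^*$, and by completeness of $(Y^*,q)$ it has a limit, which I declare to be $f(x^*)$. The first thing to check is well-definedness: if $(x_n)$ and $(x_n')$ both approximate $x^*$, interlacing them yields another $p^s$-Cauchy sequence whose $h$-image is $q^s$-Cauchy and therefore has a single limit, forcing $\lim_n h(x_n)=\lim_n h(x_n')$.

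Next I would verify that $f$ extends $h$ and is an isometry. For the extension property, the constant sequence $x_n=x$ approximates $x\in X$, so $f(x)=h(x)$. For the isometry property, given $x^*,z^*\in X^*$ with approximating sequences $(x_n),(z_n)$, joint continuity of the partial metrics gives $p(x^*,z^*)=\lim_n p(x_n,z_n)=\lim_n q(h(x_n),h(z_n))=q(f(x^*),f(z^*))$; the self-distance case $z^*=x^*$ is handled identically and yields $q(f(x^*),f(x^*))=p(x^*,x^*)$, which is the clause where the partial-metric setting genuinely departs from the classical one.

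Finally, uniqueness is the cleanest part: if $g:X^*\to Y^*$ is any isometric extension of $h$, then $g$ is continuous from $\tau_{p^s}$ to $\tau_{q^s}$, and for $x^*\in X^*$ with approximating sequence $(x_n)\subset X$ we have $g(x_n)=h(x_n)$, whence $g(x^*)=\lim_n g(x_n)=\lim_n h(x_n)=f(x^*)$. I expect the main obstacle to be the bookkeeping around self-distances: unlike in a metric space, mere $\tau_{p^s}$-convergence of $(x_n)$ does not by itself pin down $\lim_n p(x_n,x_n)$, so the whole construction rests on extracting from symmetrical density the strong convergence that simultaneously controls cross-distances and self-distances, and on checking that $h$ transports this information faithfully into $Y^*$.
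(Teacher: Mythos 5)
The paper states this result purely as a citation of Ge and Lin \cite{ge} and contains no proof of it, so there is no internal argument to compare against; judged on its own, your proposal is correct and follows the standard route, which is essentially the one taken in \cite{ge}: pass to the induced metric $p^s(x,y)=2p(x,y)-p(x,x)-p(y,y)$, note that the paper's $p$-Cauchy and proper-convergence notions coincide with $p^s$-Cauchyness and $p^s$-convergence (so partial-metric completeness is $p^s$-completeness), read symmetric density as $p^s$-density, define $f$ by $p^s$-Cauchy approximation with well-definedness via interlacing, recover the isometry property (including self-distances, since $q(f(x^*),f(x^*))=\lim_n q(h(x_n),h(x_n))=\lim_n p(x_n,x_n)=p(x^*,x^*)$) from joint sequential continuity of $p$ and $q$ under proper convergence, and get uniqueness because any isometric extension is $1$-Lipschitz for the induced metrics. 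Your closing caveat is exactly the crux: mere $\tau(p)$-density would not pin down self-distances along approximating sequences (a $\tau(p)$-convergent sequence can have many limits, as the paper itself notes), and this is precisely why Ge and Lin introduce symmetric rather than sequential density.
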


 Our aim is to give a characterization theorem for the completeness of partial metric spaces via a certain fixed point theory and using some contractive conditions on orbits.

\begin{definition} (Compare \cite{mat})
	A partial metric type on a set $X$ is a function $p: X \times X \to [0, \infty)$ such that:
	\begin{enumerate}
		\item[(pm1)] $x = y$ iff $(p(x, x) = p(x, y)=p(y,y)$ whenever $x, y \in X$,
		\item[(pm2)] $0\leq p(x, x)\leq p(x, y)$ whenever $x, y \in X$,
		
		\item[(pm3)] $p(x, y) = p(y, x);$ whenever $x, y \in X$,
		
		\item[(pm4)] $$p(x, y) + p(z,z)\leq  p(x,z)+p(z,y)$$ 
		for any points $x,y,z\in X$. 
		
	\end{enumerate}

	The pair $(X, p)$ is called a partial metric space.

\end{definition}
It is clear that, if $p ( x , y ) = 0$ , then, from (pm1) and (pm2), $x = y$.

The family $\mathcal B'$ of sets
\begin{equation}\label{def.basis2-pm}
B'_p(x,\varepsilon):=\{y\in X : p(x,y)<\varepsilon +p(x,x)\},\; x\in X,\, \varepsilon>0\,,\end{equation}
is a basis for a topology $\tau(p)$ on $X$. The topology $\tau(p)$ is $T_0$.

\begin{definition}
	Let $(X, p)$ be a partial metric space. Let $(x_n)_{n\geq 1}$ be any sequence in $X$ and $x \in X$. Then:
	
	\begin{enumerate}
		\item The sequence $(x_n)_{n\geq 1}$ is said to be convergent with respect to $\tau(p)$ (or $\tau(p)$-convergent) and converges to $x$, if $\lim\limits_{n\to \infty} p(x , x_n) = p(x, x)$.
		We write $$x_n \overset{p}{\longrightarrow} x.$$

		\item The sequence $(x_n)_{n\geq 1}$ is said to be a $p$-Cauchy sequence if
		$$\lim\limits_{n\to \infty,m\to \infty} p(x_n , x_m)$$ exists and is finite.

	\end{enumerate}

	\ $(X, p)$ is said to be complete if for every $p$-Cauchy sequence $(x_n)_{n\geq 1} \subseteq X$, there exists $x \in X$ such that:
		
		$$ \lim\limits_{n\to \infty,m\to \infty} p(x_n , x_m)= \lim\limits_{n\to \infty} p(x , x_n)=p(x,x).$$
	
\end{definition}

\begin{remark}
The reader might have observed that this definition of completeness is stronger than the one in metric spaces. Since the topology $\tau(p)$ of a partial metric space is only $T_0$, a convergent sequence can have many limits (see \cite[Example 2]{shu}). In fact, if $x_n\xrightarrow{p}x$, then $x_n\xrightarrow{p}y$ for any $y\in X$ such that  $p(x,y)=p(y,y)$.
 Indeed
$$
0\le  p(y,x_n)-p(y,y)\le p(y,x)+p(x,x_n)-p(x,x)-p(y,y)=p(x,x_n)-p(x,x)\longrightarrow 0\,.$$

To obtain uniqueness and to define a reasonable notion of completeness, a stronger notion of convergence is needed.
\end{remark}

\begin{definition}\label{def.prop-converg-pm-sp}
One says that a sequence $(x_n)$ in a partial metric space \emph{converges properly} to $x\in X$ iff
\begin{equation}\label{eq1.prop-converg-pm-sp}
\lim_{n\to\infty}p(x,x_n)=p(x,x)=\lim_{n\to\infty} p(x_n,x_n)\,.
\end{equation}

We shall write $x_n \overset{ppr}{\longrightarrow} x.$
\end{definition}

In other words, $(x_n)$  converges properly to $x$ if and only if    $(x_n)$   converges to $x$ with respect to  $\tau(p)$ and further
\begin{equation}\label{eq2.prop-converg-pm-sp}
\lim_{n\to\infty}p(x_n,x_n)=p(x,x)\,.\end{equation}

\begin{proposition}\label{p3.pm-sp}
Let $(X,p)$ be a partial metric space and  $(x_n)$  a sequence in $X$ that  converges properly to $x\in X$. Then
\begin{enumerate}
	\item[{\rm (i)}]\;\; the limit is unique, and
	\item[{\rm (ii)}]\;\; $\lim\limits_{m,n\to\infty}p(x_m,x_n)=p(x,x).$\end{enumerate}
\end{proposition}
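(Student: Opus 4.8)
The plan is to prove both parts by repeatedly invoking the modified triangle inequality (pm4) together with the small self-distance axiom (pm2), using the proper convergence hypothesis \eqref{eq1.prop-converg-pm-sp} to control the three limits $\lim_n p(x,x_n)$, $\lim_n p(x_n,x)$ and $\lim_n p(x_n,x_n)$, all of which equal $p(x,x)$ (the middle two by the symmetry axiom (pm3) and the definition of proper convergence).

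For part (ii) I would sandwich $p(x_m,x_n)$ between two quantities that both tend to $p(x,x)$. The upper bound comes from applying (pm4) to the three points $x_m,x_n$ with middle point $x$, which yields $p(x_m,x_n)\le p(x_m,x)+p(x,x_n)-p(x,x)$; letting $m,n\to\infty$ the right-hand side tends to $p(x,x)+p(x,x)-p(x,x)=p(x,x)$. For the matching lower bound I would apply (pm4) in a different grouping, to $x_m,x$ with middle point $x_n$, obtaining $p(x_m,x_n)\ge p(x_m,x)+p(x_n,x_n)-p(x_n,x)$, whose limit is again $p(x,x)$. Squeezing gives $\lim_{m,n\to\infty}p(x_m,x_n)=p(x,x)$.

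For the uniqueness in part (i), suppose $(x_n)$ converges properly to both $x$ and $y$. Comparing the self-distance conditions in \eqref{eq1.prop-converg-pm-sp} for the two limits immediately gives $p(x,x)=\lim_n p(x_n,x_n)=p(y,y)$. I would then bound $p(x,y)$ from above by applying (pm4) with middle point $x_n$: $p(x,y)\le p(x,x_n)+p(x_n,y)-p(x_n,x_n)$, and let $n\to\infty$ to get $p(x,y)\le p(x,x)+p(y,y)-p(x,x)=p(y,y)=p(x,x)$. On the other hand (pm2) gives $p(x,x)\le p(x,y)$, so in fact $p(x,y)=p(x,x)=p(y,y)$. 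Axiom (pm1) then forces $x=y$.

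There is no deep obstacle here; the whole argument is elementary. The only point that requires genuine care is choosing the correct assignment of the three points in (pm4) so that the self-distance term appears with the sign needed to produce a two-sided estimate rather than only the trivial upper bound. Once the point assignments are fixed, the remaining work is simply bookkeeping: keeping track of which of the three limits in \eqref{eq1.prop-converg-pm-sp} each term converges to as the indices tend to infinity.
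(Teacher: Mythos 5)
Your proof is correct and takes essentially the same approach as the paper: part (i) is the paper's argument almost verbatim (the (pm4) estimate $p(x,y)\le p(x,x_n)+p(x_n,y)-p(x_n,x_n)$ passed to the limit, then (pm2) and (pm1)), and part (ii) uses the identical upper bound followed by a squeeze. Your one deviation is the lower bound in (ii): a single application of (pm4) with middle point $x_n$ gives $p(x_m,x_n)\ge p(x_m,x)+p(x_n,x_n)-p(x_n,x)$ directly, which is marginally more economical than the paper's chained two-step estimate $p(x,x)\le p(x,x_m)+p(x_m,x_n)+p(x_n,x)-p(x_n,x_n)-p(x_m,x_m)$, but the idea is the same.
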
\begin{proof}
Suppose that $x,y\in X$ are such that  $(x_n)$ converges properly  to both $x$ and $y$.
Then
$$
p(x,y)\le p(x,x_n)+p(x_n,y)-p(x_n,x_n) \longrightarrow p(y,y)   \quad\mbox{as}\; n\to \infty\,,
$$
implying $p(x,y)\le p(y,y)$. But, by (pm2), $ p(y,y)\le p(x,y)$, so that
\begin{equation}\label{eq.prop-converg-pm}
p(x,y)= p(y,y)=p(x,x)\,,\end{equation}
which by (pm1) yields $x=y$.

To prove (ii) observe that
$$ p(x_m,x_n)\le p(x_m,x)+p(x,x_n)-p(x,x)$$
so that
$$ p(x_m,x_n)-p(x,x)\le p(x_m,x)-p(x,x_n)-2p(x,x)\longrightarrow 0\;\;\mbox{as}\; m,n\to \infty\,.$$

Also
\begin{align*}
p(x,x)\le& p(x,x_m)+p(x_m,x)-p(x_m,x_m)\\
\le& p(x,x_m)+p(x_m,x_n)+p(x_n,x)-p(x_n,x_n)-p(x_m,x_m)\,,
\end{align*}
implies
$$
p(x,x)-p(x_m,x_n)\le p(x,x_m)+p(x_n,x)-p(x_n,x_n)-p(x_m,x_m)\longrightarrow 0\;\;\mbox{as}\; m,n\to \infty\,.$$

Consequently $\lim\limits_{m,n\to\infty}p(x_m,x_n)=p(x,x)$.
\end{proof}
\begin{remark}\label{re.converg-seq-pm-sp}
Some authors take the condition (ii) from Proposition \ref{p3.pm-sp} in the definition of a properly
convergent sequence. As it was shown this  is equivalent to the condition from Definition \ref{def.prop-converg-pm-sp}
\end{remark}

The definition of $p$-Cauchy sequences in partial metric spaces takes the following equivalent form.
\begin{proposition}\label{def.C-seq-pm-sp}
	A sequence $(x_n)$ in a partial metric space $(X,p)$ is called a \emph{Cauchy sequence} if there exists $a\ge 0$ in $\mathbb{R}$ such that  for every $\varepsilon>0$ there exists $n_\varepsilon\in\mathbb{N}$ with
	$$
	|p(x_n,x_m)-a|<\varepsilon\,,
	$$
	for all $m,n\ge n_\varepsilon$, written also as $\lim\limits_{m,n\to \infty}p(x_n,x_m)=a$.

\end{proposition}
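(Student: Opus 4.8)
The plan is to recognize that this "proposition" is essentially a restatement of the earlier definition, so that the whole content is the equivalence between two phrasings of one condition, plus a single sign check. By the earlier definition, $(x_n)$ is $p$-Cauchy precisely when the double sequence $\big(p(x_n,x_m)\big)_{m,n}$ converges to a finite real number. I would simply observe that this is, verbatim, the assertion that there exists $a\in\mathbb{R}$ such that for every $\varepsilon>0$ one can find $n_\varepsilon\in\mathbb{N}$ with $|p(x_n,x_m)-a|<\varepsilon$ for all $m,n\ge n_\varepsilon$. Hence the two formulations are interchangeable by the very definition of $\lim_{m,n\to\infty}$, and neither the forward nor the backward implication needs an argument beyond spelling out what convergence of a double sequence means.

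The one substantive point is that the limit value $a$ is forced to be nonnegative, which is the only piece of information in the statement not already built into "the limit exists and is finite." First I would recall that, by hypothesis, $p$ takes values in $[0,\infty)$, so every term satisfies $p(x_n,x_m)\ge 0$. Then I would argue by contradiction: if $a<0$, set $\varepsilon=-a/2>0$; for all sufficiently large $m,n$ the inequality $|p(x_n,x_m)-a|<\varepsilon$ gives $p(x_n,x_m)<a+\varepsilon=a/2<0$, contradicting the nonnegativity of $p$. Therefore $a\ge 0$, exactly the range constraint recorded in the proposition. Equivalently, one may invoke the order-limit property and pass to the limit directly in $p(x_n,x_m)\ge 0$ to obtain $a\ge 0$.

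I do not expect any genuine obstacle here, since the statement reduces to the definition of a convergent double sequence together with the elementary fact that limits respect the weak inequality $p\ge 0$. The most I would watch for is being explicit that $n_\varepsilon$ may be chosen uniformly in both indices (as in the double-limit formulation of Proposition \ref{p3.pm-sp}(ii)), so that the bound $|p(x_n,x_m)-a|<\varepsilon$ holds simultaneously for all $m,n\ge n_\varepsilon$ rather than along a single index. With that remark the equivalence is complete in a couple of lines.
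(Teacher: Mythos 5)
Your proposal is correct and matches the paper's treatment: the paper offers no proof at all, presenting this ``proposition'' as a mere reformulation of the earlier definition of a $p$-Cauchy sequence in explicit $\varepsilon$--$n_\varepsilon$ form, exactly as you observe. Your added verification that $a\ge 0$ (by passing to the limit in $p(x_n,x_m)\ge 0$, or the equivalent contradiction with $\varepsilon=-a/2$) is sound and supplies the one detail the paper leaves implicit.
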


\begin{definition}
	The partial metric space $(X,p)$ is called \emph{complete} if every $p$-Cauchy sequence is properly convergent to some $x\in X$.
\end{definition}

Let $f$ be a self mapping of a partial metric space $(X,p)$. For $x,y\in X$ and $A\subset X$, define 

\[O(x,f):= \{f^nx, n\in \mathbb{N}\}, \ \  O(x,y,f)=O(x,f) \cup O(y,f),\] 
 
\[\delta(A):=\sup \{p(x,y);\ x,y,\in A\} . \]

We conclude this introductory part by introducing this family of functions.
Define a family $\Psi$ of functions as follows: $\psi \in \Psi \iff$

$ \psi :[0,\infty) \to [0,\infty) \text{ is nondecreasing, continuous on the right and} \psi(t)<t \text{ for all } t>0.$

From the definition of $\Psi$, it is easy to see that if $(x_n)_{n\geq 1} \subset [0,\infty)$ is a sequence that satisfies $x_{n+1} \leq \psi(x_n)$ for some $\psi \in \Psi $, then $x_n\to 0$ as $n\to \infty.$

\section{Main result}

Our characterization is as follows:

\begin{theorem}\label{orbital}(Orbital characterisation)
	For a partial metric space $(X,p)$, the following statements are equivalent:
	\begin{enumerate}
	\item[(1)] $(X,p)$ is complete;
	
	\item[(2)] If $f$ is a self mapping of $X$ satisfying for every $x,y\in X$ and some $\psi \in \Psi $,
	
	\begin{equation}\tag{a}\label{Car1}
	p(fx,fy) \leq \psi(\delta(O(x,y,f))), \qquad \delta(O(x,y,f))<\infty 
	\end{equation}
then $f$ has a fixed point;	 
	
	\item[(3)] If $f$ is a self mapping of $X$ satisfying for all $x,y\in X$ and some $r\in [0,1) $,
	
	\begin{equation}\tag{b}\label{Car2}
	p(fx,fy) \leq r\delta(O(x,y,f)), \qquad \delta(O(x,y,f))<\infty 
	\end{equation}
then $f$ has a fixed point.	
	\end{enumerate}
	
\end{theorem}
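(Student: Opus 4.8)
The plan is to establish the three statements as equivalent through the cycle $(1)\Rightarrow(2)\Rightarrow(3)\Rightarrow(1)$. The implication $(2)\Rightarrow(3)$ is immediate and should be disposed of first: for $r\in[0,1)$ the function $\psi_r(t)=rt$ is nondecreasing, continuous, and satisfies $\psi_r(t)=rt<t$ for $t>0$, so $\psi_r\in\Psi$; hence any $f$ obeying \eqref{Car2} obeys \eqref{Car1} with $\psi=\psi_r$, and the fixed point granted by $(2)$ serves for $(3)$. The genuine content therefore lies in the fixed point theorem $(1)\Rightarrow(2)$ and in its converse $(3)\Rightarrow(1)$, which I would prove contrapositively.

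For $(1)\Rightarrow(2)$, I would fix $x_0\in X$, set $x_n=f^nx_0$, and write $\delta_n=\delta(O(x_n,f))$, all finite by \eqref{Car1}. The key one-step estimate is $\delta_{n+1}\le\psi(\delta_n)$: for $i,j\ge n+1$ one has $O(x_{i-1},f)\cup O(x_{j-1},f)\subseteq O(x_n,f)$, so by monotonicity of $\delta$ and of $\psi$, $p(x_i,x_j)=p(fx_{i-1},fx_{j-1})\le\psi(\delta(O(x_{i-1},x_{j-1},f)))\le\psi(\delta_n)$, and taking the supremum gives the claim. By the property of $\Psi$ noted at the end of Section 1, $\delta_n\to0$, so $\lim_{m,n}p(x_m,x_n)=0$ and $(x_n)$ is $p$-Cauchy. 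Completeness yields a proper limit $z$, and Proposition \ref{p3.pm-sp}(ii) gives $p(z,z)=\lim_{m,n}p(x_m,x_n)=0$. To finish I would bound $\gamma:=\delta(O(z,f))$: applying (pm4) with the points $x_m$, together with $p(fx_{m-1},f(f^{j-1}z))\le\psi(\delta(O(x_{m-1},f^{j-1}z,f)))$ and the fact that these auxiliary diameters form a monotone family tending to $\gamma$ (here the right-continuity of $\psi$ lets me pass to the limit), one gets $p(z,f^jz)\le\psi(\gamma)$ for every $j\ge1$ and, by the same shift argument, $\delta(O(fz,f))\le\psi(\gamma)$. Since $p(z,z)=0$, these combine to $\gamma\le\psi(\gamma)$, which forces $\gamma=0$; as $\psi(0)=0$ follows from the definition of $\Psi$, we obtain $p(z,fz)=0=p(z,z)=p(fz,fz)$, and (pm1) gives $fz=z$.

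For $(3)\Rightarrow(1)$ I would argue by contraposition: assuming $(X,p)$ is not complete, I would construct a self-map $f$ satisfying \eqref{Car2} with no fixed point. Starting from a $p$-Cauchy sequence $(x_n)$ with $\lim_{m,n}p(x_n,x_m)=a$ and no proper limit (and, after thinning, with distinct terms), a short computation shows that $\rho(x):=\lim_n p(x,x_n)$ exists for every $x$, that $\rho(x)\ge a$, and that the absence of a proper limit means one never has both $\rho(x)=a$ and $p(x,x)=a$. The idea is to send each $x$ to a point $x_{m(x)}$ deep in the tail, with $m(x)$ so large that the images $fx,fy$ are mutually $p$-close to $a$ while the orbit $O(x,y,f)$ still carries $x$ (resp.\ $y$) at $p$-distance essentially $\rho(x)$ from that tail, so that the ratio of $p(fx,fy)$ to $\delta(O(x,y,f))$ stays below a fixed $r<1$; and $f$ has no fixed point because $fx$ always sits strictly further along the sequence than $x$.

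I expect this last construction to be the main obstacle. In the classical metric setting the potential $\rho$ is strictly positive away from the missing limit, which produces the required gap for free; in the partial-metric setting the Cauchy value $a$ may be positive and the self-distances $p(x,x)$ enter the diameters, so the thinning of the subsequence and the jump function $m(x)$ must be calibrated to keep a single contraction constant $r<1$ valid uniformly in $x$ and $y$. Confirming that \eqref{Car2} holds \emph{globally} — not merely along the sequence — while at the same time excluding fixed points is the delicate point on which the whole converse rests.
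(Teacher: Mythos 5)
Your $(1)\Rightarrow(2)$ and $(2)\Rightarrow(3)$ are correct and follow essentially the paper's route: you iterate on a single orbit where the paper runs two orbits $f^nx_0$, $f^ny_0$ in parallel, and you close the argument by deriving $\gamma\le\psi(\gamma)$ for $\gamma=\delta(O(z,f))$ via right-continuity of $\psi$, where the paper runs an equivalent $\varepsilon$-argument by contradiction; these are cosmetic differences. The genuine gap is $(3)\Rightarrow(1)$. There you describe an intention --- send each $x$ to a point $x_{m(x)}$ ``deep in the tail'' and calibrate $m(x)$ so that a single $r<1$ works uniformly --- but you never define $f$, never verify \eqref{Car2} for arbitrary $x,y\in X$, and you explicitly concede that this verification is ``the delicate point on which the whole converse rests.'' Since this implication is the substantive half of the characterization (it is what makes the fixed point property a \emph{completeness} criterion), the proof is incomplete as it stands.

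The missing device is the passage to the completion, which replaces your intrinsic potential $\rho(x)=\lim_n p(x,x_n)$ by an extrinsic one. The paper embeds $X$ in its Ge--Lin completion $X^*$, takes $u\in X^*\setminus X$ with $x_n \overset{ppr}{\longrightarrow} u$, and sets $b=\tfrac15$, $P_n=\{x\in X: p(x,u)\le b^n\}$, $n(x)=\max\{n: x\in P_n\}$ --- finite precisely because $p(x,u)>0$ for every $x\in X$, as $p(x,u)=0$ would force $x=u\notin X$ --- and $f(x)=x_{k(n(x)+2)}$, a jump of \emph{two} levels toward $u$. This yields $p(fx,u)\le b^{n(x)+2}\le b\,p(x,u)$, hence $p(fx,u)\le\frac{b}{1-b}\,p(x,fx)$, and then $p(fx,fy)\le p(fx,u)+p(u,fy)\le \frac{2b}{1-b}\max\{p(x,fx),p(y,fy)\}\le \frac12\,\delta(O(x,y,f))$, with $\delta(O(x,y,f))<\infty$ since $f(X)\subset P_1$, while $f$ is fixed-point free because $n(fx)\ge n(x)+2>n(x)$. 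In short, the completion turns the missing proper limit into an actual point lying at positive partial distance from every point of $X$, and the Kannan-type estimate through $u$ delivers the uniform constant $r=\tfrac12$ that your calibration scheme could not produce. Your worry about a positive Cauchy value $a$ is not misplaced --- the paper's definition of $k(n)$ tacitly requires $\lim_{m,n}p(x_n,x_m)=p(u,u)=0$, so the published construction is itself terse on exactly this point --- but within the paper's framework the concrete idea absent from your proposal is this completion-plus-geometric-level-set construction.
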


\begin{proof}
$(1) \Longrightarrow (2)$ For any $x_0,y_0 \in X$ and $n\in \mathbb{N}$, define the sequences $(x_n)$ and $(y_n)$ by $x_n = f^nx_0$ and $y_n = f^ny_0$. Using the hypothesis from (2), we have, for any $k,m\geq n$

\[ p(fx_k,fy_m) \leq \psi(\delta(O(x_k,y_m,f))) . \]

In particular, $$ p(x_{n+1},y_{n+1}) \leq \psi(\delta(O(x_n,y_n,f))).$$ 
If follows from the definition of $\Psi$, that $\delta(O(x_n,y_n,f)) \to 0$ as $n\to \infty.$ Therefore $(x_n)$ and $(y_n)$ are $p$-Cauchy sequences. By completeness of $X$, there exists $x^*\in X$ such that $x_n \overset{ppr}{\longrightarrow} x^*,$ i.e.

$$ \lim\limits_{n\to \infty,m\to \infty} p(x_n , x_m)= \lim\limits_{n\to \infty} p(x^* , x_n)=p(x^*,x^*).$$

Note that \begin{align*} p(y_n,x^*) &\leq p(y_n,x_n)+ p(x_n,x^*)-p(x_n,x_n)\\
&\leq \delta(O(x_n,y_n,f)) + p(x_n,x^*)-p(x_n,x_n)\\
& \leq \delta(O(x_n,y_n,f)) + p(x_n,x^*)
\end{align*}

Taking the limit as $n\to \infty$, we have 

\[ \lim\limits_{n\to \infty} p(y_n,x^*)  \leq \lim\limits_{n\to \infty}\delta(O(x_n,y_n,f)) + \lim\limits_{n\to \infty} p(x_n,x^*)=p(x^*,x^*) , \]

i.e $$\lim\limits_{n\to \infty} p(y_n,x^*)  \leq p(x^*,x^*) .$$

Moreover since 

\begin{align*} p(x_n,x^*) &\leq p(x_n,y_n)+ p(y_n,x^*)-p(y_n,y_n)\\
&\leq \delta(O(x_n,y_n,f)) + p(y_n,x^*),\\
\end{align*}

we obtain

\[p(x^*,x^*)\leq \lim\limits_{n\to \infty} p(y_n,x^*).  \]

Therefore $$\lim\limits_{n\to \infty} p(y_n,x^*) = p(x^*,x^*).$$

From $p(y_n,x^*) \leq p(y_n,x_n)+ p(x_n,x^*)-p(x_n,x_n)$ and taking the limit as $n\to \infty$, we have $$p(x^*,x^*)=0.$$

On the other hand, observe that we have 
\[ p(y_n,y_m) \leq p(y_n,x^*)+p(x^*,y_n)-p(x^*,x^*)    \]

and

\[p(x^*,x^*)\leq p(x^*,y_n) + p(y_n,y_m)+p(y_m,x^*)    \]

respectively; and taking the limit as $n,m\to \infty$ in both inequalities, we obtain 

$$\lim\limits_{n\to \infty,m\to \infty} p(y_n , y_m)= \lim\limits_{n\to \infty} p(x^* , y_n)=p(x^*,x^*)=0,$$

i.e. $$y_n \overset{ppr}{\longrightarrow} x^*.$$

We shall now prove that $x^*$ is a fixed point for $f$ and for that, it is enough to establish that $\delta(O(x^*,f))=0.$

By the way of contradiction, assume that $\delta(O(x^*,f))>0.$ Then for any $n,n \in \mathbb{N},$ we have

\[ p(f^nx^*,f^mx^*) \leq    \psi(\delta(O(f^{n-1}x^*,f^{m-1}x^*,f))) \leq \psi(\delta(O(x^*,f))),                  \]

which means in particular that

\[\delta(O(fx^*,f)) \leq \psi(\delta(O(x^*,f)))< \delta(O(x^*,f)). \]

It follows that 

\begin{equation} \tag{c}\label{Car3}
\delta(O(x^*,f)) =\sup  \{p(x^*,f^mx^*),m\in \mathbb{N}  \} ,  \end{equation}

since $$\delta(O(x^*,f)) = \max \{\sup  \{p(x^*,f^mx^*),m\in \mathbb{N}  \},\delta(O(fx^*,f)) \}.$$
And this value can't be $\delta(O(fx^*,f))$ since $\delta(O(fx^*,f)) < \delta(O(x^*,f)). $
\end{proof}

In view of $\lim\limits_{n\to \infty} p(x^* , x_n)=p(x^*,x^*)=0$, for every $\varepsilon>0$, there exists $k\in \mathbb{N}$ such that $p(x_n,x^*)<\varepsilon$ whenever $n\geq k.$

Consequently, whenever $n\geq k$ and for any $m\in \mathbb{N}$ 

\begin{align*}
 p(x^*,f^mx^*) &\leq p(x^*,x_n) + p(x_n,x_m) - p(x_n,x_n)\\  
 &\leq \varepsilon + \psi(\delta(O(f^{n-1}x^*,f^{m-1}x^*,f))) \\
 &\leq \varepsilon + \psi(\max\{2\varepsilon,\delta(O(x^*,f))+\varepsilon \} ).
  \end{align*}

Hence \[\sup \{p(x^*,f^mx^*), m\in \mathbb{N}\} \leq \varepsilon + \psi(\max\{2\varepsilon,\delta(O(x^*,f))+\varepsilon \} ). \]

Letting $\varepsilon \to 0$, by the above inequality and \eqref{Car3}, we have 

\[ \delta(O(x^*,f)) =\sup  \{p(x^*,f^mx^*),m\in \mathbb{N}  \}  \leq  \psi(\delta(O(x^*,f))) <   \delta(O(x^*,f)),  \]

--a contradiction. Hence $\delta(O(x^*,f))=0$ and $fx^* = x^*.$

\vspace*{0.3cm}

$(2) \Longrightarrow (3)$ Take $\psi(t) =rt$ for all $t\in [0,\infty).$

\vspace*{0.3cm}

$(3) \Longrightarrow (1)$ Suppose that $(X,p)$ is not complete. Let $X^*$ be an isometric completion of $X$ (in the sense of Ge \cite{ge}). Then there exists a $p$-Cauchy sequence $(x_n) \subset X$ and a point $u\in X^*\setminus X$ such that $x_n \overset{ppr}{\longrightarrow} u.$ Take $b = \frac{1}{5}$ and $r=\frac{1}{2}$ and define $P_n = \{x\in X:p(x,u)\leq b^n\}$ where $n\in \mathbb{Z}$. One easily sees that $P_n$ is nonempty for each $n\in \mathbb{Z}$ and $$X =\underset{n\in \mathbb{Z}}{\bigcup}  P_n.$$

Set $n(x) = \max \{n: x\in P_n  \}$ for any $x\in X$. Since $x_i \overset{ppr}{\longrightarrow} x$, for each $n\in \mathbb{N}$, there exists a smallest $k(n)$ such that $x_i \in P_n$ for $i\geq k(n).$ Define a self mapping $f$ on $X$ by

\[f(x) = \begin{cases} x_{k(2)} &\mbox{if } n(x)\leq 0 \\
 x_{k(n(x)+2)}	& \mbox{if } n (x) >0 \end{cases}
\]

for each $x\in X.$ Obviously, $f$ has no fixed point. Observe that $f(X)\subset P_1$ and hence $f$ is bounded\footnote{Meaning that there exists $M>0$ such that $p(fx,fy)\leq M$ whenever $x,y\in X$.}. Moreover, 

$$p(x,f^nx) \leq p(x,fx)+ p(fx,f^nx) -p(fx,fx)\leq p(x,fx) + \delta(f(X)),$$

for each $x\in X$ and each $n\in \mathbb{N}$. It follows that

\[ \delta(O(x,y,f)) \leq p(x,y) + p(x,fx)+p(y,fy) + \delta(f(X)) <\infty  \]

for all $x,y\in X.$ It is easy to verify that $fx \in P_{n(x)+2}$ for each $x\in X$, i.e.

\[ p(fx,u)\leq b^{n(x)+2}. \]

By definition of $n(x), $ we have that $b^{n(x)+1}\leq p(x,u),$ hence

\[ b ^{n(x)+2} \leq bp(x,u), \]

which means that

\[ p(fx,u) \leq bp(x,u) \leq b [p(x,fx)+p(fx,u)]. \]

It follows that for each $x\in X$

\[ p(fx,u) \leq \frac{b }{1-b}p(x,fx). \]

Consequently, for each $x,y \in X$

\begin{align*}
p(fx,fy) \leq p(fx,u) + p(u,fy) &\leq \frac{b }{1-b} [p(x,fx)+ p(y,fy)] \\
& \leq r \max\{ p(x,fx),p(y,fy)\}\\
& \leq r \delta(O(x,y,f)),
\end{align*}

that is, $f$ satisfies \eqref{Car2}. By (3), $f$ has a fixed point and this is a contradiction. The proof is complete.

A direct consequence of the above theorem is the following:

\begin{corollary}\label{theor2.2}
	If $T$ is a self mapping of a complete partial metric space $(X,p)$ satisfying \eqref{Car1} for all $x,y\in X$ and some $\psi \in \Psi$, then $f$ has a unique fixed point $x^*$ and $T^nx \overset{ppr}{\longrightarrow} x^*$ for each $x\in X.$
\end{corollary}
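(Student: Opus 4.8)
The plan is to read off existence and proper convergence from the proof of Theorem~\ref{orbital} and to supply a short, self-contained argument for uniqueness. (In the statement the symbol $f$ is the map $T$.) First I would invoke the implication $(1)\Longrightarrow(2)$ of Theorem~\ref{orbital}: since $(X,p)$ is complete, statement $(2)$ holds, and as $T$ satisfies \eqref{Car1} it admits a fixed point $x^*$. Moreover, the argument establishing $(1)\Longrightarrow(2)$ does more than produce a fixed point: for an arbitrary starting pair $x_0,y_0$ it shows $\delta(O(x_n,y_n,T))\to 0$, that the orbits $x_n=T^nx_0$ and $y_n=T^ny_0$ are $p$-Cauchy, and that both $x_n\overset{ppr}{\longrightarrow} z$ and $y_n\overset{ppr}{\longrightarrow} z$ for a \emph{common} fixed point $z$ with $p(z,z)=0$. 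Hence for every $x\in X$ the sequence $T^nx$ converges properly to some fixed point; it remains only to see that this limit is always the same $x^*$.

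For uniqueness I would exploit \eqref{Car1} on the orbit of a fixed point, where the orbit collapses to a finite set. If $w$ is any fixed point then $O(w,T)=\{w\}$, so $\delta(O(w,w,T))=p(w,w)$, and \eqref{Car1} with $x=y=w$ gives $p(w,w)=p(Tw,Tw)\le\psi(p(w,w))$; since $\psi(t)<t$ for all $t>0$, this forces $p(w,w)=0$. Now let $x^*,y^*$ be fixed points. Then $O(x^*,y^*,T)=\{x^*,y^*\}$, and using the vanishing of the self-distances together with (pm2) one gets $\delta(O(x^*,y^*,T))=p(x^*,y^*)$. Applying \eqref{Car1} with $x=x^*$, $y=y^*$ yields $p(x^*,y^*)=p(Tx^*,Ty^*)\le\psi(p(x^*,y^*))$, whence $p(x^*,y^*)=0$, and by (pm1)--(pm2) we conclude $x^*=y^*$.

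Finally I would combine the two observations: the fixed point is unique, and by the first paragraph each orbit $T^nx$ converges properly to a fixed point, hence to $x^*$. This gives both the uniqueness of $x^*$ and the proper convergence $T^nx\overset{ppr}{\longrightarrow} x^*$ for every $x\in X$, completing the proof.

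The genuinely delicate point is not any of the partial-metric estimates (which are routine, and already carried out inside the proof of Theorem~\ref{orbital}), but rather the identification of the common limit across different starting points. I expect that to be the crux of the write-up, and it is exactly what the uniqueness argument secures. As a cross-check, one may note that the $(1)\Longrightarrow(2)$ proof, applied to the pair $(x_0,w)$ for a fixed point $w$, already forces the constant orbit $T^nw=w$ to converge properly to $x^*$; by the uniqueness of proper limits (Proposition~\ref{p3.pm-sp}(i)) this again yields $w=x^*$, giving an alternative route to uniqueness that leans entirely on the theorem's proof.
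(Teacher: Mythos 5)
Your proposal is correct and takes essentially the same route as the paper: existence of the fixed point and the proper convergence $T^nx \overset{ppr}{\longrightarrow} x^*$ are read off from the proof of the implication $(1)\Longrightarrow(2)$ of Theorem~\ref{orbital}, and uniqueness comes from applying \eqref{Car1} with fixed points as arguments. The paper merely asserts that ``Condition \eqref{Car1} ensures uniqueness''; your observation that a fixed point's orbit is a singleton, so $\delta(O(x^*,y^*,T))=p(x^*,y^*)$ and $p(x^*,y^*)\le\psi(p(x^*,y^*))$ forces $p(x^*,y^*)=0$ and hence $x^*=y^*$, simply supplies the detail the paper leaves implicit.
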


\begin{proof}
	It follows from Theorem \ref{orbital} that for each $x\in X$, there exists a fixed point $x^*$ of $f$ such that $T^nx \overset{ppr}{\longrightarrow} x^*$. Condition \eqref{Car1} ensures that $T$ has a unique fixed point.
\end{proof}

We conclude this manuscript with the following example:

\begin{example}
	Let $X=[0,\infty)$ with the partial metric $p(x,y)=\max\{x,y\}$ and take $\psi(t)=rt$ for $t\in [0,\infty)$, where $r\in [0,1)$. Define the self mapping $T$ on $X$ by $Tx=rx$. It is easily seen that, for all $x,y \in X,$
	
	\[p(Tx,Ty)\leq r p(x,y)\leq r\delta(O(x,y,T)) \]
	
	and 
	
	\[ \delta(O(x,y,T)) \leq x+y <\infty. \]
	
	Hence the conditions of Theorem \ref{theor2.2} are satisfied and the fixed point is $x=0.$
	
	Moreover, for any $x\in X,$ $x_n:=T^nx=r^nx$ for $n\geq 1.$ Hence
	
	\[ \lim\limits_{n<m,n,m\to \infty}p(x_n,x_m) = \lim\limits_{n\to \infty}r^nx=0 = p(0,0)=\lim\limits_{n\to \infty}p(x_n,0),  \]
	
	i.e. 
	$$T^nx \overset{ppr}{\longrightarrow} 0.$$
	
\end{example}

	\bibliographystyle{amsplain}

\begin{thebibliography}{99} 
	
\bibitem{rom}	C. Alegre, H. Da\u{g} , S. Romaguera and P. Tirado; \emph{Characterizations of quasi-metric completeness in
	terms of Kannan-type fixed point theorems}, Hacettepe Journal of Mathematics and Statistics
	Volume 46 (1) (2017), 67--76.	
	
	\bibitem{alg} M.A. Alghamdi, N. Shahzad, O. Valero; \textit{On fixed point theory in partial metric spaces}, Fixed Point Theory Appl. 2012 (2012) 175.



\bibitem{alt} I. Altun, S. Romaguera; \emph{Characterizations of partial metric completeness in terms of weakly contractive mappings having fixed point}, Applicable Analysis and Discrete Mathematics. 6(2):247--256, (2012).

\bibitem{ge} X. Ge and S. Lin; \textit{Completions of partial metric spaces}, Topology Appl. 182 (2015),
16--23.

\bibitem{hu67}
T. K. Hu, \emph{On a fixed-point theorem for metric spaces}, Amer. Math.
Monthly \textbf{74} (1967), 436--437.	


	\bibitem{kirk} W. A. Kirk; \textit{Caristi's fixed point theorem and metric convexity}, Colloquium Mathematicum, 36, 1, 81-86, (1976).
	

	\bibitem{mat} S. G. Matthews; \textit{Partial metric topology}, in: Proceedings of the 8th Summer Conference on Topology and its Applications, Ann. New York Acad. Sci. 728 (1994) 183--197.
	
	 \bibitem{reil}I. L. Reilly , P. V. Subrahmanyam, M. K. Vamanamurthy;
	\textit{Cauchy sequences in quasi-pseudo-metric spaces},
	Monatsh. Math.
	93
	(1982), 127--140.
	
\bibitem{rom2} S. Romaguera, A. Guti\'errez;
	\textit{A note on Cauchy sequences in quasi-pseudometric  spaces}, Glasnik Mat. 21 (1986) 191-200
	
	
	
	\bibitem{rom} S. Romaguera; \textit{A Kirk type characterization of completeness for partial metric spaces}, Fixed Point Theory Appl. (2010), Art. ID 493298, 6 pp.
	
	\bibitem{shu} S. Shukla; \textit{Partial b-Metric Spaces and Fixed Point
		Theorems}, Mediterr. J. Math. (2014) 11: 703-711.
	
	
	\bibitem{sub}V. Subrahmanyam; \emph{Completness and fixed points},Monatsh. Math., 80, 325--330 (1975).
	
	
	
	
	\bibitem{west} J.D. Weston; \textit{A characterization of metric completeness}, Proc. Amer. Math. Soc. 64 (1) (1977) 186–188.
	
\end{thebibliography}

\end{document}